\newtheorem{thm}{Theorem}
\newtheorem*{thm*}{Theorem}
\newtheorem{lem}[thm]{Lemma}
\newtheorem{cor}[thm]{Corollary}
\newtheorem{conj}[thm]{Conjecture} 
\theoremstyle{definition}
\theoremstyle{remark}
\begin{document}

\title{Cops and Robbers on diameter two graphs}

\author{Zsolt Adam Wagner}

\begin{abstract}
In this short paper we study the game of \emph{Cops and Robbers}, played on the vertices of some fixed graph $G$ of order $n$. The minimum number of cops required to capture a robber is called the cop number of $G$. We show that the cop number of graphs of diameter 2 is at most $\sqrt{2n}$, improving a recent result of Lu and Peng by a constant factor. We conjecture that this bound is still not optimal, and obtain some partial results towards the optimal bound.
\end{abstract}


\maketitle

\section{Introduction}

The game of \emph{Cops and Robbers}, introduced independently by Nowakowski and Winkler \cite{nowwi} and Quillot \cite{qui}, is a perfect information game played on a fixed graph $G$. There are two players, a set of $k\geq 1$ cops and the robber. The cops begin the game by occupying any vertices of their choice (where more than one cop can be placed at a vertex). Then the robber chooses a vertex for himself. Afterwards the cops and the robber move in alternate rounds, with cops going first. At each step any cop or robber is allowed to move along an edge of $G$ or remain stationary. The cops win if at some time there is a cop at the same vertex as the robber; otherwise, i.e., if the robber can elude the cops indefinitely, the robber wins. The minimum number of cops for which there is a winning strategy, no matter how the robber plays, is called the \emph{cop number} of $G$, and is denoted by $c(G)$. We will assume that $G$ is connected and simple, because deleting multiple edges or loops does not affect the possible moves of the players, and the cop number of a disconnected graph equals the sum of the cop numbers for each component. We write $c(n)$ for the maximum of $c(G)$ amongst all $n$-vertex connected graphs.

Currently the best known upper bound on the cop number of general graphs is due to Scott, Sudakov \cite{sudakovscottpaper} and Lu, Peng \cite{lupeng}. They showed that  $c(n)\leq n 2^{-(1+o(1))\sqrt{\log n}}$. The best known open question in this area is Meyniel's conjecture - which first appeared in \cite{frankl} - stating that $c(n)=O(\sqrt{n})$. This conjecture has been almost completely established for random graphs by Bollob\'{a}s, Kun, Leader \cite{bollobaskunleader}, \L uczak, Pra\l at \cite{zigzag} and Pra\l at, Wormald \cite{pralatwormald}. For general graphs, even $c(n) = O(n^{1-\epsilon})$ is completely open. For a survey of results on the cop number, we refer to \cite{survey2,survey1}.

In a recent paper \cite{lupeng}, Lu and Peng considered graphs of diameter 2, and bipartite graphs of diameter 3. They showed using a random argument that for such graphs we have $c(G) \leq 2\sqrt{n}-1$, and hence proved a special case of Meyniel's conjecture. Their result is tight up to a constant factor: an infinite class of diameter $2$ graphs with $c(G)\approx \sqrt{n}/2$ is given by Bonato and Burgess in \cite{meynielextremal} (the polarity graphs). The aim of this paper is to give a shorter proof, with no randomness involved, that gives a slightly stronger result.

\begin{thm}\label{main}
Let $G$ be a connected graph of diameter $2$, or a connected bipartite graph of diameter $3$, of order $n$. Then $c(G)\leq  \sqrt{2n}$.
\end{thm}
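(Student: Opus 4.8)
The plan is to argue according to the size of the minimum degree. Write $\delta=\delta(G)$ and fix a vertex $v$ with $\deg v=\delta$. If $\delta+1\le\sqrt{2n}$, I would finish immediately: since $G$ has diameter $2$, every vertex lies at distance $0$, $1$ or $2$ from $v$, hence lies in $N[v]$ or has a neighbour in $N(v)$, so $N[v]$ is a dominating set. Placing one cop on each vertex of $N[v]$ then wins after a single move — if the robber starts at $r$, either $r\in N[v]$ and he is caught, or $r$ has a neighbour $w\in N(v)$ and the cop at $w$ steps onto him — giving $c(G)\le\delta+1\le\sqrt{2n}$. For a bipartite graph of diameter $3$ with bipartition $X\cup Y$ I would run the same argument with one more layer: for $v\in Y$ the set $N(v)$ dominates all of $Y$ and all neighbours of $v$ in $X$, and the remaining vertices of $X$ (those at distance $3$ from $v$) are dominated by the vertices of $Y$ at distance $2$ from $v$, so choosing $v$ of small degree again yields a small dominating set and finishes in one move.

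So assume from now on that $\delta+1>\sqrt{2n}$, equivalently $n<(\delta+1)^2/2$; here $\delta$ is large. I would station one cop $C_0$ at $v$ for the whole game. Because $C_0$ captures the robber in one move whenever the robber stands on a vertex of $N[v]$ at the start of a cop turn, the robber is permanently confined to $S:=V\setminus N[v]$, a set of size $n-1-\delta<n-\sqrt{2n}$, and moreover he may only ever move along edges of $G[S]$. It then remains to capture a robber so confined using the remaining $\lfloor\sqrt{2n}\rfloor-1$ cops.

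For that last step I would play off the two elementary ways of menacing a vertex $s\in S$: put a cop on a neighbour of $s$ inside $S$, or put a cop on a neighbour of $s$ inside $A:=N(v)$ (one exists because $G$ has diameter $2$). If $G[S]$ happens to admit a dominating set of size $\le\lfloor\sqrt{2n}\rfloor-1$, I place the remaining cops there and capture follows in one move. Otherwise I would exploit that every $s\in S$ has degree at least $\delta$ in $G$, so that a vertex of $S$ with few neighbours inside $S$ must have almost all of $A$ among its neighbours; a greedy/averaging argument, fed by $n<(\delta+1)^2/2$, should then produce a set of at most $\lfloor\sqrt{2n}\rfloor-1$ vertices — some inside $S$, some inside $A$ — adjacent to \emph{every} vertex of $S$, and again capture follows in one move, with $C_0$ still guarding $N[v]$. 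Where a static placement falls just short, the extra resource is the mobility of $C_0$: from $v$ it can step onto any vertex of $A$ and so threaten any prescribed ball of $S$ on a given turn, a single mobile cop thereby doing the work of many static ones.

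The main obstacle will be exactly this final step. One cannot simply induct on $G[S]$, since $G[S]$ is an arbitrary induced subgraph and carries no diameter bound; and dominating $S$ by a greedily chosen set loses a logarithmic factor, which would give only $\sqrt{n/2}\,\log n$ cops rather than $\sqrt{2n}$. Removing that loss is precisely what forces one to trade the ``$S$-side'' cops against the ``$A$-side'' cops and to use the hub cop's mobility, and arranging the bookkeeping so that the two groups of cops together number at most $\sqrt{2n}$ for every admissible value of $\delta$ is where the real work of the proof lies.
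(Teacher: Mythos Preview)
Your Case 1 is fine, but the proof is incomplete: Case 2 is only a plan, and you yourself flag the final step as unresolved. The greedy/averaging ideas you sketch do not obviously close it. The paper fills precisely this gap, via an idea you dismissed too quickly.

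You wrote that ``one cannot simply induct on $G[S]$, since $G[S]$ is an arbitrary induced subgraph and carries no diameter bound.'' But the robber is confined to $S$ while the \emph{cops still move on all of $G$}, and the diameter-$2$ hypothesis is only ever used by the cops. The relevant lemma is: if the robber is confined to a subgraph $H\subseteq G$ of maximum degree at most $k$, then $k$ cops moving on $G$ catch him --- once the robber sits at $x$ with $H$-neighbours $v_1,\dots,v_l$ ($l\le k$), cop $c_i$ steps to a $G$-neighbour of $v_i$ (which exists since $\diam G=2$) and capture follows in two more rounds. This needs $G$ to have diameter $2$, not $H$. So one \emph{can} induct: define $c_G(H)$ as the number of cops needed when the robber is confined to $H$, and prove $c_G(H)\le\lfloor\sqrt{2m}\rfloor$ for $m=|H|$ by induction on $m$. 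If $H$ has maximum degree $\le\lfloor\sqrt{2m}\rfloor$ the lemma finishes; otherwise pick $v\in H$ of $H$-degree $>\lfloor\sqrt{2m}\rfloor$, station a cop at $v$, delete $v$ and its $H$-neighbourhood, and apply the induction hypothesis to the remainder. The arithmetic $1+\bigl\lfloor\sqrt{2(m-\lfloor\sqrt{2m}\rfloor-2)}\bigr\rfloor\le\lfloor\sqrt{2m}\rfloor$ closes the loop. No domination, no averaging, no hub-cop mobility is needed.

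Two smaller points. First, your Case 1 keys on a vertex of \emph{minimum} degree, whereas the induction requires a vertex of \emph{maximum} $H$-degree at each stage, since that is what guarantees a large chunk is removed. Second, your bipartite diameter-$3$ sketch does not yield a small dominating set: the vertices at distance $2$ from $v$ can be numerous even when $\deg v$ is small, so you have not dominated the distance-$3$ layer cheaply. The paper simply observes that the max-degree lemma above holds verbatim for bipartite diameter-$3$ graphs, after which the same induction goes through unchanged.
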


We will conjecture that the correct upper bound should be $\sqrt{n}$ for every graph, not just for diameter $2$ graphs, and provide some evidence in favour of the conjectured bound.

\section{Proof of the main result}
We first present the proof of Theorem \ref{main} for the diameter $2$ case. The following lemma was proved in \cite{lupeng}:

\begin{lem} \label{lem:degen}Let $k>0$ be an integer,  $G$ be a graph of diameter $2$, and let $H$ be a subgraph of $G$, such that the maximum degree of $H$ is at most $k$. Suppose the robber is restricted to move on the edges of $H$, while the cops can move on $G$ as usual. Then $k$ cops can catch the robber. 
\end{lem}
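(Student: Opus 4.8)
The plan is to give an explicit strategy for $k$ cops that uses the diameter hypothesis exactly once. All $k$ cops start together on a single vertex $u$; the robber then reveals his starting vertex $r$, and we may assume $r\neq u$ (otherwise a cop is already on him) and indeed that $\operatorname{dist}_G(u,r)=2$ (otherwise one cop steps onto $r$). Write $N_H(r)=\{v_1,\dots,v_d\}$; since $H$ has maximum degree at most $k$ we have $d\le k$. If $d=0$ the robber can never move, so a single cop reaches $r$ in at most two steps (as $\diam(G)=2$) and wins; hence assume $d\ge 1$. On their first move I would send cop $i$, for $1\le i\le d$, from $u$ to a vertex $g_i\in N_G[v_i]$: this is legal because $\operatorname{dist}_G(u,v_i)\le\diam(G)=2$, so one step along a $u$--$v_i$ geodesic lands cop $i$ at distance at most $1$ from $v_i$. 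The remaining $k-d$ cops (if any) stay put.

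The point of this configuration is that it immediately \emph{pins} the robber: while he is at $r$, moving to any $v_i$ lets cop $i$ step from $g_i$ onto $v_i$ and capture him next turn (or $g_i$ already equals $v_i$ and the move is impossible), so the robber is forced to remain at $r$. Note that at this stage no cop need be adjacent to $r$ itself, so a short second phase is required to finish.

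In the second phase I would move cop $1$ from $g_1$ onto $v_1$ (legal since $g_1\in N_G[v_1]$), keeping every other cop fixed. Crucially no guard has been vacated: $v_1\in N_G[v_1]$, so cop $1$ still guards $v_1$, and each $v_i$ with $i\ge 2$ is still covered by cop $i$. Now cop $1$ sits on $v_1$, a $G$-neighbour of $r$. On the robber's turn he cannot move onto $v_1$ (occupied), cannot move to any other $v_i$ (cop $i$ either occupies it or captures on the next move), and cannot remain at $r$ (cop $1$ captures on the next move); hence he is caught within two further cop moves.

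The argument is short because the only real content is the observation that under $\diam(G)=2$ a single cop move reaches the closed neighbourhood of any target vertex, which pins the robber at once; everything else is bookkeeping. The steps that need care are the order of play (cops move first in each round), verifying that the attacking move in the second phase abandons no guarded vertex, and disposing of the degenerate $d=0$ case.
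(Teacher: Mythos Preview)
Your proof is correct and follows essentially the same approach as the paper's: use $\diam(G)=2$ to move cop $i$ into $N_G[v_i]$ in a single step, thereby pinning the robber, and then advance one cop onto a neighbour of $r$ to force capture. The paper's version is terser (it does not fix a common starting vertex $u$ and omits the $d=0$ case and the explicit second phase), but the mechanism is identical.
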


\begin{proof}
 Suppose a robber moves to a vertex $x$ with neighbours $v_1, v_2, \ldots, v_l$ with $l\leq k$. Let the cops be $c_1, c_2, \ldots, c_k$, and for each $i=1 \ldots l$, move $c_i$ to a neighbour of $v_i$. The robber is caught in at most two more rounds. 
\end{proof}

Given a graph $G$ and a subgraph $H \subseteq G$, write $c_G(H)$ for the number of cops needed to catch a robber who is restricted to move on $H$. (The cops are still allowed to move on all the edges of $G$, only the robber has restricted movement.) Also, given a graph $G$ and $m\leq |V(G)|$, define $c_G (m)=\max \{c_G (H) : H\subseteq G$, and $|V(H)|=m \}$.

\begin{proof}[Proof of Theorem \ref{main}] By induction on $m$, we will prove that $c_G(m)\leq \lfloor \sqrt{2m}\rfloor$ for every $m\leq n$. As $c_G(n)=c(G)$, this will imply the theorem. As $c_G(1)=c_G(2)=c_G(3)=1$, this holds for $m=1, 2, 3$.

Let $H \subseteq G$ of order $m\geq 4$. If every vertex of $H$ has degree at most $\lfloor \sqrt{2m}\rfloor$ then $c_G(H)\leq \lfloor \sqrt{2m}\rfloor$ by Lemma ~\ref{lem:degen}. Othervise, let $v$ be a vertex in $H$ of degree more than $\lfloor \sqrt{2m}\rfloor$. Put a stationary cop on $v$ to guard its neighbourhood.  Remove $v$ and its neighbourhood from $H$ to obtain $H' \subset G$. Now the robber can only move on $H'$ otherwise he will get caught by our stationary cop. Then $|H'|\leq m-\lfloor \sqrt{2m}\rfloor-2$, so $c_G(H)\leq 1+c_G(H')\leq 1+\left \lfloor{ \sqrt{2(m-\lfloor \sqrt{2m}\rfloor-2)}}\right \rfloor  \leq \lfloor \sqrt{2m}\rfloor$. (Note that this last inequality in best possible.) As this holds for any $H\subseteq G$ of order $m$, we get $c_G(m)\leq \lfloor \sqrt{2m}\rfloor$ as required. 
\end{proof}

In the case when $G$ is bipartite of diameter $3$, Lemma \ref{lem:degen} still holds (see Lemma 3 in \cite{lupeng}). As this was the only place where we used the diameter of $G$, all results go through identically for bipartite diameter $3$ graphs.

Let $G$ is a Moore graph of order $n$, that is, a strongly regular graph of diameter two, girth five and degree $\sqrt{n-1}$. Then by Theorem $3$ in \cite{aignerfromme} (stating that for graphs of girth five and minimum degree $d$ we have $c(G)\geq d$), and the fact that the neighbourhood of any vertex is a dominating set, we get $c(G)=\sqrt{n-1}$. While we could not find an infinite family of graphs with $c(G)\approx \sqrt{n}$, the following conjecture is likely to be close to best possible:

\begin{conj} \label{mainconj}If $G$ is a graph of diameter $2$, then $c(G)\leq \sqrt{n}$
\end{conj}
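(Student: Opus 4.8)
We are not able to settle Conjecture~\ref{mainconj}; instead we describe the approach we consider most promising and indicate the point at which it currently stalls.

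The plan is to run the same induction that proved Theorem~\ref{main}, but targeting the bound $c_G(m)\le\lceil\sqrt m\,\rceil$. Recall the dichotomy used there: for a subgraph $H$ of order $m$, either every vertex of $H$ has degree at most $k$ and Lemma~\ref{lem:degen} catches the robber with $k$ cops, or some vertex $v$ has $\deg_H(v)>k$ and a single stationary cop on $v$ confines the robber to a subgraph on at most $m-k-2$ vertices, after which one invokes the inductive hypothesis. The difficulty is that the two alternatives demand \emph{incompatible thresholds}: to keep the first case within $\sqrt m$ cops one must take $k$ of order $\sqrt m$, whereas the recursion $c_G(m)\le 1+c_G(m-k-2)$ only settles at the fixed point $\sqrt m$ when $k$ is of order $2\sqrt m$. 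The argument that gives $\sqrt{2n}$ simply splits the difference, taking $k=\lfloor\sqrt{2m}\rfloor$. To do better one must recover the missing factor of two: either by strengthening Lemma~\ref{lem:degen}, showing that a robber confined to a diameter-$2$ subgraph of maximum degree at most $2\sqrt m$ can still be caught by only $\sqrt m$ cops, or by strengthening the guarding step so that each stationary cop removes about $2\deg_H(v)$ vertices from the robber's reach rather than $\deg_H(v)+1$.

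I expect the second route to be the easier one, and the place to use the diameter-$2$ hypothesis. Once a cop occupies $v$, every vertex the robber can safely use lies at distance exactly two from $v$, hence has a neighbour in $N(v)$; so the robber's remaining territory $H'$ is constrained by the bipartite incidence structure between $N(v)$ and $H'$. One would like to deduce from this either that some $w\in N(v)$ dominates a large piece of $H'$ — in which case a second cop placed at $w$ removes enough additional vertices of $H'$ to pay for itself in the recursion — or that the vertices of $H'$ are spread thinly over $N(v)$, so that $H'$ has substantially smaller maximum degree than $H$ and the induction improves at the next step. Making one of these outcomes always deliver the needed factor of two, uniformly over the possible sizes of $N(v)$ and $H'$, is the main obstacle: the natural averaging bounds on the bipartite graph between $N(v)$ and $H'$ fall a constant factor short, and it is not clear that diameter $2$ alone rules out the extremal bipartite configuration. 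This is essentially a Zarankiewicz-type question whose answer I do not know in the form required here.

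As a warm-up one should try to prove the conjecture for regular, or at least vertex-transitive, diameter-$2$ graphs, since all known near-extremal examples — the polarity graphs and the Moore graphs — are of this type. For a $d$-regular diameter-$2$ graph the Moore bound forces $d\ge\sqrt{n-1}$, so the only genuinely hard range is $\sqrt n\le d\lesssim 2\sqrt n$; there the graph is forced to be very close to strongly regular, and since a Moore graph exists for hardly any value of $d$, the rigidity of this near-extremal regime may be exactly what one needs to win the last constant factor. Note finally that if the Moore graphs are genuinely extremal then the truth of the conjecture is $c(G)\le\sqrt{n-1}$, so any proof will have essentially no room to spare.
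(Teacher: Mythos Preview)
You are right that the statement is open, and the paper does not prove it either; so on the main point your proposal and the paper agree.  Where they differ is in the partial progress offered.  Your discussion stays inside the dichotomy argument of Theorem~\ref{main} and correctly isolates the factor-of-two mismatch between the degree threshold needed for Lemma~\ref{lem:degen} and the threshold needed for the recursion $c_G(m)\le 1+c_G(m-k-2)$ to close at $\sqrt m$; you then propose to exploit the bipartite incidence between $N(v)$ and the residual graph, and to attack the regular case first.  The paper takes a different route: it introduces the notion of an \emph{$s$-trap} (a vertex whose neighbourhood can be dominated by $\lfloor s\rfloor$ cops), and uses the Chv\'atal--McDiarmid transversal bound to show that \emph{every} graph, not just diameter-$2$ graphs, has a $\sqrt n$-trap.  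From this it defines a weakened game with teleporting cops and proves the unconditional bound $c_T(G)\le\sqrt n$; Conjecture~\ref{mainconj} would then follow from the further conjecture that $c_T(G)=c(G)$ whenever $\diam(G)=2$.  So the paper trades your local analysis of the induction for a clean intermediate invariant (the trap/teleporting framework) that already achieves the target constant in the relaxed game, and pushes the remaining difficulty into comparing $c_T$ with $c$.  Your Zarankiewicz-style obstruction and the paper's gap between $c_T$ and $c$ are, morally, two views of the same missing idea; it may be worth checking whether the bipartite configurations you cannot rule out are exactly those for which the teleporting and ordinary games diverge.
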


Moreover, we conjecture that the above holds for any graph $G$, not just for diameter $2$ graphs (hence the constant in Meyniel's conjecture should be $1$).

We now present an attempt at improving the bound given by Theorem \ref{main}. First note that the domination number of a diameter two graph may be as large as $C\sqrt{n\log n}$, see  \cite{domin_diam2}. Given a graph $G$, and a vertex $v$, we say that a cop \emph{controls} $v$ if the cop is on $v$ or on an adjacent vertex. Given a positive real $s$, say that $v$ is an \emph{$s$-trap} if one can place $\lfloor s \rfloor$ cops on the vertices of $G-\{v\}$ such that all neighbours of $v$ are controlled by the  cops. 

Note that if a graph has no $s$-trap then a robber may forever escape $\lfloor s \rfloor$ cops. So in order to have a chance at proving Conjecture \ref{mainconj}, it is necessary (but not sufficient) to establish that every graph has a $\sqrt{n}$-trap. We will use the following theorem proved by Chv\'{a}tal and McDiarmid \cite{chvatal} to do this:

\begin{thm}\label{chvatalthm}
Let $H$ be a $k$-uniform hypergraph (i.e. every edge contains $k$ vertices) with $n$ vertices and $m$ edges. Write $\tau (H)$ for the transversal of $H$, i.e. the size of a smallest set of vertices meeting all edges of $H$. Then

$$\tau (H) \leq \frac{\lfloor k/2 \rfloor m + n}{\lfloor 3k/2 \rfloor}$$

\end{thm}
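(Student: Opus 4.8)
The plan is to prove, by induction on $n+m$, the equivalent inequality
\[
\lfloor 3k/2\rfloor\cdot\tau(H)\ \le\ n+\lfloor k/2\rfloor\,m
\]
for every $k$-uniform hypergraph $H$. The base case $m=0$ is immediate ($\tau(H)=0$), deleting an isolated vertex only decreases the right-hand side, and $k=1$ is trivial, so assume $k\ge 2$ and that every vertex lies in an edge. The strategy is to repeatedly add a single vertex to the transversal and recurse, with a three-way split on $\Delta(H)$. \textbf{High-degree case:} if some vertex $v$ lies in $d\ge 3$ edges, add $v$ to the transversal and recurse on the $k$-uniform hypergraph obtained by deleting $v$ and the $d$ edges through it; then $\tau(H)\le 1+\tau(H')$, and since the floor inequality $\lfloor 3k/2\rfloor\le 1+3\lfloor k/2\rfloor$ holds for all $k\ge 2$, the inductive bound for $H'$ (with $n-1$ vertices and $\le m-3$ edges) gives the bound for $H$. \textbf{All-private edge:} otherwise $\Delta(H)\le 2$; if some edge $e$ consists entirely of degree-$1$ vertices, pick one vertex of $e$, delete $e$ together with all $k$ of its (now isolated) vertices, and recurse — the required inequality is exactly the identity $\lfloor 3k/2\rfloor=k+\lfloor k/2\rfloor$.

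The remaining, essential case is when $\Delta(H)\le 2$ and every edge contains a degree-$2$ vertex. Here I would pass to the auxiliary multigraph $G$ with $V(G)=E(H)$, joining $e$ and $f$ by one edge for each degree-$2$ vertex of $H$ lying in $e\cap f$. Then $G$ has $m$ vertices, exactly as many edges as $H$ has degree-$2$ vertices, no isolated vertex, and $\Delta(G)\le k$ (a vertex $e$ of $G$ has $G$-degree equal to the number of degree-$2$ vertices in the $k$-set $e$). A set of degree-$2$ vertices of $H$ is a transversal of $H$ precisely when the corresponding edges of $G$ form an edge cover of $G$, so $\tau(H)\le\rho(G)$, the minimum size of an edge cover; by Gallai's identity $\rho(G)+\nu(G)=|V(G)|$, hence $\tau(H)\le m-\nu(G)$, where $\nu(G)$ is the matching number. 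Counting degrees, $km=\sum_e|e|$ equals the number of degree-$1$ vertices plus twice the number of degree-$2$ vertices, so $km-n\le|E(G)|$. Thus everything reduces to the purely graph-theoretic lemma that \emph{every multigraph $G$ with $\Delta(G)\le k$ satisfies $|E(G)|\le\lfloor 3k/2\rfloor\,\nu(G)$}: granting this, $\nu(G)\ge|E(G)|/\lfloor 3k/2\rfloor\ge(km-n)/\lfloor 3k/2\rfloor$, and substituting into $\tau(H)\le m-\nu(G)$ and using $\lfloor 3k/2\rfloor-\lfloor k/2\rfloor=k$ rearranges to exactly the desired bound.

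The main obstacle is this matching lemma. The naive induction — delete the two endpoints of one matching edge, losing $\deg(a)+\deg(b)-1\le 2k-1$ edges — is too wasteful as soon as $k\ge 3$, since then $2k-1>\lfloor 3k/2\rfloor$. I expect the right route is through the Gallai–Edmonds decomposition (or, equivalently, the Erdős–Gallai bound on the number of edges in terms of the matching number together with $\Delta\le k$), carefully charging the edges inside each factor-critical component; the extremal configuration is a triangle with every edge taken with multiplicity $k/2$, which is what forces the constant $\lfloor 3k/2\rfloor$ and makes it sharp for even $k$. Everything else in the proof is bookkeeping with floor functions.
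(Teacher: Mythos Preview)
The paper does not prove Theorem~\ref{chvatalthm}; it is merely quoted from Chv\'atal and McDiarmid, so there is no in-paper proof to compare your attempt against.

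That said, your reduction is correct and is essentially the argument Chv\'atal and McDiarmid themselves give. The only gap is the lemma you flag as the ``main obstacle'', namely that every loopless multigraph $G$ with $\Delta(G)\le k$ satisfies $|E(G)|\le \lfloor 3k/2\rfloor\,\nu(G)$. This is not an obstacle at all once you recognise it as an immediate corollary of Shannon's edge-colouring theorem: the chromatic index of a multigraph is at most $\lfloor 3\Delta/2\rfloor$, so $E(G)$ decomposes into at most $\lfloor 3k/2\rfloor$ matchings, each of size at most $\nu(G)$. (Your auxiliary graph $G$ is indeed loopless, since a degree-$2$ vertex of $H$ lies in two \emph{distinct} hyperedges.) This is exactly how Chv\'atal and McDiarmid close the argument; the triangle with each side of multiplicity $\lfloor k/2\rfloor$ that you identified is the Shannon extremal example. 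There is no need to go through Gallai--Edmonds, and Erd\H{o}s--Gallai would not suffice here as it is a simple-graph statement and cannot see the multiplicities.

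Two small clean-ups: in the degree count you actually get equality $|E(G)|=km-n$, not just $km-n\le |E(G)|$ (which only makes the final line tighter); and in the ``all-private edge'' case it is worth noting explicitly that the chosen vertex covers only the edge $e$, so $\tau(H)\le 1+\tau(H')$ is genuinely what you need, which you use correctly.
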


\begin{lem}\label{rootntrap}
Every graph $G$ has a $\sqrt{n}$-trap. There are graphs with no $\left( \sqrt{n}-1\right)$-traps.
\end{lem}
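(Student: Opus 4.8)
I would aim to produce one vertex that is a $\sqrt n$-trap, taking it to be a vertex $v$ of minimum degree $\delta$; the goal is then to place $\lfloor\sqrt n\rfloor$ cops on $V(G)\setminus\{v\}$ so that every neighbour of $v$ is controlled. If $\delta\le\lfloor\sqrt n\rfloor$ there is nothing to prove --- put one cop on each neighbour of $v$ --- so assume $\delta>\lfloor\sqrt n\rfloor$; then every vertex of $G$ has degree at least $\delta\ge\sqrt n$. The key observation is that a set $S\subseteq V(G)\setminus\{v\}$ of cops controls all of $N(v)$ precisely when $S$ is a transversal of the hypergraph $\mathcal H$ on vertex set $V(G)\setminus\{v\}$ whose edges are the sets $N[u]\setminus\{v\}$ for $u\in N(v)$, where $N[u]=N(u)\cup\{u\}$. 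Each such edge has at least $\delta$ vertices, so we may shrink every edge to a $\delta$-subset, obtaining a $\delta$-uniform hypergraph without enlarging any transversal, and then invoke Theorem~\ref{chvatalthm}.

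When $\delta\le 2\sqrt n$ this suffices immediately: $\mathcal H$ has $n-1$ vertices and exactly $\deg(v)=\delta$ edges, so Theorem~\ref{chvatalthm} with $k=\delta$ yields a transversal of size at most $\frac{\lfloor\delta/2\rfloor\,\delta+(n-1)}{\lfloor 3\delta/2\rfloor}\le\frac{\delta}{3}+\frac{2n}{3\delta}$, and since the quadratic $\delta^2-3\sqrt n\,\delta+2n$ is non-positive on $[\sqrt n,2\sqrt n]$, this quantity is at most $\sqrt n$ --- hence, being an integer, at most $\lfloor\sqrt n\rfloor$.

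For $\delta>2\sqrt n$ I would first thin $N(v)$ greedily before calling on Theorem~\ref{chvatalthm}. Maintain the set $R\subseteq N(v)$ of not-yet-controlled neighbours, starting with $R=N(v)$; while $|R|>3n/\delta$, add a cop at a vertex $w\ne v$ maximising $|N[w]\cap R|$. Since $\sum_{w\ne v}|N[w]\cap R|=\sum_{u\in R}\deg(u)\ge\delta|R|$, such a $w$ controls at least $\delta|R|/(n-1)$ vertices of $R$, so each step shrinks $|R|$ by a factor at most $1-\delta/n$ and the greedy phase ends after at most $1+\frac n\delta\ln\frac{\delta^2}{3n}$ steps. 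Afterwards $|R|\le 3n/\delta$, and Theorem~\ref{chvatalthm} applied to the $\delta$-uniform hypergraph with edges $N[u]\setminus\{v\}$, $u\in R$ --- which now has at most $3n/\delta$ edges --- finishes with at most $\frac13\cdot\frac{3n}{\delta}+\frac{2n}{3\delta}=\frac{5n}{3\delta}$ further cops. Writing $\delta=c\sqrt n$ with $c>2$, the total is at most $1+\frac{\sqrt n}{c}\bigl(2\ln c-\ln 3+\tfrac53\bigr)$. The step I expect to be fiddly is exactly the elementary one-variable check that the coefficient of $\sqrt n$ here stays below $1$ for every $c>2$ (the maximum, near $c\approx 2$, is about $0.98$), so that the total is at most $\lfloor\sqrt n\rfloor$ once $n$ is large; one then disposes of the finitely many small $n$ directly, and it is worth re-optimising the cutoff $3n/\delta$ (chosen to balance the greedy and Chv\'atal--McDiarmid costs) to keep a comfortable margin.

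For the second assertion I would point to a Moore graph $G$ --- a $\sqrt{n-1}$-regular graph of girth $5$, such as the $5$-cycle, the Petersen graph, or the Hoffman--Singleton graph. Fix any $v$ and any $w\ne v$. The neighbours of $v$ that a cop on $w$ controls are exactly those in $N[w]\cap N(v)$, and this set has at most one element: two of its elements would be two common neighbours of $v$ and $w$, giving a triangle through the edge $vw$ if $w\sim v$ and a $4$-cycle if $w\not\sim v$, neither of which exists in a graph of girth $5$. Hence controlling all $\sqrt{n-1}$ neighbours of $v$ requires at least $\sqrt{n-1}$ cops placed off $v$. Writing $d=\sqrt{n-1}$, we have $n=d^2+1$, so $d<\sqrt n<d+1$ and therefore $\lfloor\sqrt n-1\rfloor=d-1<d$; thus $v$ is not a $(\sqrt n-1)$-trap, and since $v$ was arbitrary, $G$ has no $(\sqrt n-1)$-trap.
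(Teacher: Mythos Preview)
Your treatment of the second assertion (Moore graphs) and of the cases $\delta\le\lfloor\sqrt n\rfloor$ and $\lfloor\sqrt n\rfloor<\delta\le 2\sqrt n$ is essentially the paper's argument: pick a minimum-degree vertex $v$, form the hypergraph whose edges are the sets $N[u]\setminus\{v\}$ for $u\in N(v)$, shrink to $\delta$-uniformity, and apply Theorem~\ref{chvatalthm}.

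Where you diverge is the remaining case. You keep the focus on the minimum-degree vertex $v$ and, when $\delta>2\sqrt n$, run a greedy covering phase followed by another appeal to Theorem~\ref{chvatalthm}. This does work asymptotically, but---as you yourself flag---the one-variable estimate only delivers $\lfloor\sqrt n\rfloor$ cops once $n$ is fairly large, and ``disposing of the finitely many small $n$ directly'' is not innocuous here: you would in principle have to verify the lemma for every graph with minimum degree exceeding $2\sqrt n$ on up to a couple of thousand vertices. The paper avoids this completely by splitting on a different quantity. Instead of asking whether the \emph{minimum} degree exceeds $2\sqrt n$, it asks whether \emph{some} vertex $w$ has degree at least $2\lfloor\sqrt n\rfloor$. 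If so, place one cop at $w$; that cop already controls all of $N[w]$, so it suffices to find a trap in $G'=G\setminus N[w]$, which has at most $n-2\lfloor\sqrt n\rfloor-1$ vertices. Induction on $|G|$ together with the elementary inequality $1+\bigl\lfloor\sqrt{\,n-2\lfloor\sqrt n\rfloor-1\,}\bigr\rfloor\le\lfloor\sqrt n\rfloor$ then finishes the argument for every $n$ at once. Only when \emph{all} degrees lie strictly between $\lfloor\sqrt n\rfloor$ and $2\lfloor\sqrt n\rfloor$ does the paper call on Theorem~\ref{chvatalthm}. In short, a one-line induction on a high-degree vertex replaces your greedy phase and eliminates both the calculus and the small-$n$ loose end.
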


\begin{proof}
For the second part, note that Moore graphs have no $\left(\sqrt{n}-1\right)$-traps, since every cop can control at most one neighbour of a fixed vertex $v$. For the first part, use induction on the order of the graph. If $|G|\leq 3$ we are done. Let $G$ have order $n\geq 4$. If $G$ has a vertex of degree at most $\lfloor \sqrt{n}\rfloor$ then we are done. If there exists a vertex of degree at least $2\lfloor \sqrt{n}\rfloor$ then put a stationary cop there, and we are done by induction since $1+\left \lfloor{ \sqrt{n-2\lfloor \sqrt{n}\rfloor-1}}\right \rfloor  \leq \lfloor \sqrt{n}\rfloor $. So we may assume every vertex has degree more than $\lfloor \sqrt{n} \rfloor$, and less than $2\lfloor \sqrt{n} \rfloor$.

Let $v$ be the vertex of minimal degree in $G$, say of degree $d$. Consider the hypergraph on vertex set $V(G)-\{v\}$, with edges $E_i$ for $i=1 \dots d$ being the neighbourhoods of the $d$ neighbours of $v$, excluding $v$ and including the vertex itself. Then $|E_i|\geq d$ for all $i$, and we want to show that this hypergraph has a transversal of size $\lfloor \sqrt{n}\rfloor$. It is sufficient to consider the case where $|E_i|=d$ for all $i$. Using Theorem \ref{chvatalthm} with $k=d$ and $m=d$, we conclude that $\tau (H) \leq \lfloor \sqrt{n}\rfloor$ and hence $v$ is a $\lfloor \sqrt{n}\rfloor$-trap.
\end{proof}

We have the following simple bound on the number of traps:

\begin{lem}\label{trapnumber}
Let $G$ be of order $n$ and let $\sqrt{n}\leq \alpha \leq n$. The number of $\alpha$-traps in $G$ is bigger than $\alpha - \sqrt{n-\alpha}- 1$.
\end{lem}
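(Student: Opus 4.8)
The plan is to produce the traps one at a time, repeatedly applying Lemma~\ref{rootntrap} to a shrinking induced subgraph of $G$ while keeping track of how many cops the bookkeeping costs. First dispose of $\alpha=n$: then every vertex $v$ is an $n$-trap, since $n$ cops suffice to occupy the at most $n-1$ vertices of $N(v)$, so $G$ has $n>\alpha-\sqrt{n-\alpha}-1$ traps. So assume $\sqrt n\le\alpha<n$.

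I would build distinct vertices $v_1,v_2,\dots$ recursively: having chosen $v_1,\dots,v_j$, set $G_j:=G-\{v_1,\dots,v_j\}$ (a graph on $n-j$ vertices; Lemma~\ref{rootntrap} applies, its proof never using connectedness), and apply Lemma~\ref{rootntrap} to $G_j$ to get a vertex $v_{j+1}$ of $G_j$ and a set $C_{j+1}\subseteq V(G_j)\setminus\{v_{j+1}\}$ of $\lfloor\sqrt{n-j}\rfloor$ cops controlling every neighbour of $v_{j+1}$ inside $G_j$. The crucial observation is that $v_{j+1}$ is then an $\alpha$-trap of the whole of $G$ as soon as $\lfloor\sqrt{n-j}\rfloor+j\le\lfloor\alpha\rfloor$: place cops on $C_{j+1}$ together with those of $v_1,\dots,v_j$ that are neighbours of $v_{j+1}$ in $G$ --- at most $j$ extra cops, and the resulting set avoids $v_{j+1}$. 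A neighbour $u$ of $v_{j+1}$ in $G$ is controlled by this set: if $u\in\{v_1,\dots,v_j\}$ there is a cop on $u$ itself, and otherwise $u$ is a neighbour of $v_{j+1}$ in $G_j$, so some cop of $C_{j+1}$ sits on $u$ or on a $G_j$-neighbour --- hence $G$-neighbour --- of $u$.

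Because $j\mapsto\lfloor\sqrt{n-j}\rfloor+j$ is non-decreasing (the floor drops by at most $1$ when $j$ grows by $1$) and equals $\lfloor\sqrt n\rfloor\le\lfloor\alpha\rfloor$ at $j=0$, the indices $j$ with $\lfloor\sqrt{n-j}\rfloor+j\le\lfloor\alpha\rfloor$ form an initial segment $\{0,1,\dots,J-1\}$, so the construction yields $J$ distinct $\alpha$-traps of $G$; it remains to show $J>\alpha-\sqrt{n-\alpha}-1$. For $0\le j\le\lfloor\alpha\rfloor$, unwinding the floor and squaring rewrites $\lfloor\sqrt{n-j}\rfloor+j\le\lfloor\alpha\rfloor$ as $m^2+m+1>n-\lfloor\alpha\rfloor$ with $m:=\lfloor\alpha\rfloor-j$, so $J=\lfloor\alpha\rfloor+1-m_0$, where $m_0$ is the least non-negative integer with $m_0^2+m_0+1>n-\lfloor\alpha\rfloor$. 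Minimality gives $m_0^2-m_0\le n-\lfloor\alpha\rfloor-1\le n-\alpha$, so $m_0\le\tfrac12\bigl(1+\sqrt{1+4(n-\alpha)}\bigr)\le 1+\sqrt{n-\alpha}$, and hence
\[
J=\lfloor\alpha\rfloor+1-m_0\ \ge\ \lfloor\alpha\rfloor-\sqrt{n-\alpha}\ >\ \alpha-\sqrt{n-\alpha}-1 .
\]
The only points needing care are this lifting step --- that control inside $G_j$ upgrades to control inside $G$ once the at most $j$ extra cops are added --- and the floor arithmetic in the last display; I do not expect either to be a real obstacle, which matches the word ``simple'' in the statement.
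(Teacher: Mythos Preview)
Your proof is correct and takes essentially the same approach as the paper: repeatedly apply Lemma~\ref{rootntrap} to a shrinking subgraph and lift each trap back to $G$ at the cost of one extra cop per deleted vertex. The paper packages this as a short induction on $n$ (an $(\alpha-1)$-trap in $G-v$ is an $\alpha$-trap in $G$, with the deleted $\sqrt{n}$-trap $v$ itself supplying the one additional $\alpha$-trap needed), whereas you unroll the induction explicitly and do the floor arithmetic by hand.
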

\begin{proof}
Induction on $n$, the claim holds for $n=1,2,3$. By Lemma \ref{rootntrap}, $G$ has a $\sqrt{n}$-trap, say $v$. Let $G' = G - \{v\}$. Note that if $u$ is an $(\alpha-1)$-trap in $G'$ then it is an $\alpha$-trap in $G$. If $\sqrt{n-1}\leq \alpha-1 \leq n-1$ then by induction the number of $(\alpha-1)$-traps in $G'$ is bigger than $\alpha-1 - \sqrt{n-\alpha}-1$ and the result follows. If, on the other hand, we have $\alpha-1<\sqrt{n-1}$, then $\alpha - \sqrt{n-\alpha}-1< 1$ and we are done since we have already established that $v$ is an $\alpha$-trap.
\end{proof}

Let $k=\lfloor \sqrt{n}\rfloor$ and let $\mathscr{P}=\mathscr{P}(G)$ be the graph whose vertices are the possible positions of the $k$ cops on the graph $G$, with $pq \in E(\mathscr{P})$ if and only if it is possible for the cops to move from position $p$ to position $q$, or vice versa. Then $\mathscr{P}$ is the $k$-fold strong product of $G$ with itself.

Consider the following relation $\preceq$ described by Clarke and MacGillivray in \cite{kcopwingraphs}. For $i=0,1,\dots$ we define $\preceq _i$ from $V(G)$ to $V(\mathscr{P})$ inductively as follows: 

\begin{enumerate}
\item For all $x\in V(G)$ and $p\in V(\mathscr{P})$, we have $x\preceq_0 p$ if, in position $p$, one of the cops is located at vertex $x$.
\item For $i>0$, we have $x\preceq_i p$ if, for every $y\in N_G(x)$, there exists $q\in N_\mathscr{P} (p)$ such that $y\preceq_j q$ for some $j<i$. 
\end{enumerate} If we have $\lfloor\sqrt{n}\rfloor$ cops and assume for simplicity that the robber is not allowed to pass, then Lemma \ref{rootntrap} is equivalent to the assertion that $\preceq _0$ is a proper subset of $\preceq _1$, and Lemma \ref{trapnumber} gives a lower bound on the size of $\preceq_1 \backslash \preceq_0$. The natural next step towards Meyniel's conjecture would be to prove that $\preceq_2$ is much bigger than $\preceq_1$, that is to say that in every graph that is not complete, there are many positions from which the cops need $2$ rounds to win -- but we could not prove this.

We now define a closely related game. The game of Teleporting Cops and Robbers differs from the usual game in that, now we allow the cops to jump to any vertex they want in their turn, except that they are not allowed to jump onto the robber. The robber loses if after his round he is in a neighbourhood of a cop. This makes the cops much stronger, especially in graphs of large diameter. We define the \emph{teleporting cop number} $c_T(G)$ to be the least number of teleporting cops required to catch the robber.

We note that clearly $c_T(G)\leq c(G)$ for any graph $G$. Moreover, Aigner and Fromme's proof (Theorem 3 in \cite{aignerfromme}) works for this game as well. Hence we instantly get that for Moore graphs we have $c_T(G)=c(G)=\sqrt{n-1}$, and for incidence graphs of projective planes we have $c_T(G)=c(G)\approx \sqrt{n/2}$ (see Theorem 4.1 in \cite{pralat}). The following theorem is an immediate corollary of Lemma \ref{rootntrap}.

\begin{cor}\label{telepcorol}
For every graph $G$, we have $c_T(G)\leq \sqrt{n}$.
\end{cor}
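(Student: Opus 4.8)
The cops' positions in the teleporting game carry no information into the future (they may reposition arbitrarily each round), so the only relevant state is the robber's vertex, and I would first recast the game on $V(G)$. Write $N[S]=\bigcup_{v\in S}N_G[v]$, let $W_i\subseteq V(G)$ be the set of robber vertices from which $\lfloor\sqrt n\rfloor$ teleporting cops capture within $i$ robber-moves, and put $W=\bigcup_i W_i$; these $W_i$ increase, and $c_T(G)\le\lfloor\sqrt n\rfloor$ iff $W=V(G)$. By inspection $v\in W_{i+1}$ iff there is $C\subseteq V(G)\setminus\{v\}$ with $|C|\le\lfloor\sqrt n\rfloor$ and $N_G[v]\setminus N[C]\subseteq W_i$ (the cops move to $C$; every safe reply of the robber, including staying put, keeps him in $W_i$, so he loses in one fewer move). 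Suppose $R:=V(G)\setminus W\neq\emptyset$. Letting $i\to\infty$ above (and using that the $W_i$ stabilise) shows that for every $r\in R$ and every $C\subseteq V(G)\setminus\{r\}$ with $|C|\le\lfloor\sqrt n\rfloor$ some vertex of $N_G[r]\cap R$ lies outside $N[C]$; since $N_G[r]\cap R=N_{G[R]}[r]$, this says that in $G[R]$ no vertex $r$ has its closed neighbourhood dominated by $\lfloor\sqrt n\rfloor$ vertices of $V(G)\setminus\{r\}$, a fortiori not by $\lfloor\sqrt n\rfloor$ vertices of $R\setminus\{r\}$.

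I would derive a contradiction from the following closed-neighbourhood strengthening of Lemma \ref{rootntrap}: every graph $H$ on $N$ vertices has a vertex $v$ with $N_H[v]$ dominated by at most $\lfloor\sqrt N\rfloor$ vertices of $V(H)\setminus\{v\}$. Applied with $H=G[R]$ (so $N=|R|\le n$ and $\lfloor\sqrt N\rfloor\le\lfloor\sqrt n\rfloor$), this contradicts the previous paragraph, forcing $R=\emptyset$ and hence $c_T(G)\le\lfloor\sqrt n\rfloor\le\sqrt n$. The strengthening is proved by imitating the proof of Lemma \ref{rootntrap}, by induction on $N$: if some $w$ has $\deg w\le\lfloor\sqrt N\rfloor$ then guards on all neighbours of $w$ dominate $N_H[w]$ itself; if some $w$ has $\deg w\ge 2\lfloor\sqrt N\rfloor$ then guard $w$ and recurse on $H-N_H[w]$ (of order $\le N-2\lfloor\sqrt N\rfloor-1$), the guard on $w$ mopping up whichever neighbours of the recursively produced vertex lie in $N_H[w]$, and $1+\lfloor\sqrt{N-2\lfloor\sqrt N\rfloor-1}\rfloor\le\lfloor\sqrt N\rfloor$; otherwise all degrees lie strictly between $\lfloor\sqrt N\rfloor$ and $2\lfloor\sqrt N\rfloor$, and for the minimum-degree vertex $v$ (of degree $d$) one reserves one guard for a neighbour $u_1$ of $v$ — which then also dominates $v$ — and applies Theorem \ref{chvatalthm} to the $\le d-1$ hyperedges $N_H[u]\setminus\{v\}$ with $u\in N_H(v)\setminus N_H[u_1]$.

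The reformulation and the reduction are routine; the step I expect to be the real work is the last case of the strengthening — making the Chvátal--McDiarmid count land at $\le\lfloor\sqrt N\rfloor$ \emph{after} spending a guard on $v$. The transversal bound of Theorem \ref{chvatalthm} for a $d$-uniform hypergraph with $d-1$ edges on $N-1$ vertices only barely beats $\lfloor\sqrt N\rfloor$, and it is met with equality in the same extremal window $N\approx(\lfloor\sqrt N\rfloor+1)^2$, $d\approx\lfloor\sqrt N\rfloor+1$ in which Moore graphs already make Lemma \ref{rootntrap} sharp; squeezing in the extra guard there is delicate and will need an additional idea — for instance choosing $u_1$ among the neighbours of $v$ so as to maximise the number of its neighbours inside $N_H(v)$, or refining the transversal estimate directly in this narrow range. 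That is the crux; everything else follows the template of Lemma \ref{rootntrap} and the book-keeping already present in its proof.
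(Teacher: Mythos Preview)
The paper's proof is two sentences: by Lemma~\ref{rootntrap} there is a $\sqrt n$-trap $v$; if the robber is ever at $v$ the cops teleport to the trap configuration and catch him on his next move, so delete $v$ and apply induction to $G-v$. No residual-set machinery and no strengthening of Lemma~\ref{rootntrap} is used.

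All of your extra work comes from allowing the robber to stay put, which turns ``dominate $N(v)$'' into ``dominate $N[v]$''. The paper does not confront this: just before the corollary it remarks that one may ``assume for simplicity that the robber is not allowed to pass'', and its proof reads naturally under that convention (``gets caught immediately'' presumes the robber leaves $v$). Under the no-pass convention even your own framework collapses to the paper's argument --- the defining property of $r\in R$ becomes that the \emph{open} neighbourhood $N_{G[R]}(r)$ is undominated by any $\lfloor\sqrt n\rfloor$ vertices, and Lemma~\ref{rootntrap} applied verbatim to $G[R]$ already contradicts this, with no closed-neighbourhood version required. Your strengthening, which is only needed for the pass variant, may well be true, but as you yourself concede the Chv\'atal--McDiarmid bound in the critical window $d\approx\lfloor\sqrt N\rfloor+1$ leaves no slack for the extra guard on $v$, and you have not supplied the ``additional idea'' you call for; so as written the proposal is not a complete proof, and the obstacle you are stuck on is one the paper simply never took on.
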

\begin{proof}
Let $v$ be a $\sqrt{n}$-trap. If the robber moves there at any point in the game, he gets caught immediately. So we may delete $v$ and we are done by induction.
\end{proof}

A natural question to ask is whether $c_T(G)=c(G)$ holds for every graph $G$. The answer is no --  a cubic graph of girth at least $80$ -- whose existence is proved in \cite{highgirthexists} --  has $c(G)>1000$, but $c_T(G)=3$  (see Theorem $1.1$ in \cite{frankl}). It would be interesting to further investigate how $c_T$ and $c$ are related. While the following conjecture seems too good to be true, we could not find a counterexample to it:

\begin{conj}\label{telepisnormal}
For any diameter $2$ graph $G$, we have $c(G)=c_T(G)$.
\end{conj}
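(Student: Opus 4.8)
Since $c_T(G)\le c(G)$ holds in general, it is enough to prove $c(G)\le c_T(G)$ when $\diam(G)=2$. Write $k=c_T(G)$, fix an optimal strategy for the $k$ teleporting cops, and let $R_0\subseteq R_1\subseteq\cdots\subseteq R_t=V(G)$ be its level sets: $R_i$ is the set of robber vertices from which the teleporting cops, to move, win within $i$ rounds. By definition, for $r\in R_i$ there is a configuration $q(r)=(q_1(r),\dots,q_k(r))$ not using the vertex $r$ with $N[r]\setminus\bigcup_j N[q_j(r)]\subseteq R_{i-1}$ (where $R_{-1}=\emptyset$); in particular a robber standing on an $R_0$-vertex is caught at once by the corresponding $k$ cops, since a stationary cop on $v$ forbids the robber from ever occupying $N[v]$. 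The plan is to have $k$ \emph{ordinary} cops shadow this strategy: starting from $q(r)$ for the robber's initial vertex $r$, they repeatedly herd the robber into ever-smaller level sets $R_i$, and since $\diam(G)=2$ any target configuration is reachable by the ordinary cops in at most two rounds, so the whole teleporting strategy can in principle be emulated, merely slowed down.

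Concretely, I would prove by downward induction on $i$ that the $k$ ordinary cops can force the robber into $R_i$ in finitely many rounds; the case $i=t$ is trivial because $R_t=V(G)$. For the inductive step, suppose the robber is at $r\in R_{i+1}$ and the cops are at some configuration $p$. They spend two rounds travelling from $p$ to $q(r)$, say through an intermediate configuration in which every cop is adjacent to, or already at, its eventual target; one checks that if the robber enters the ``net'' $\bigcup_j N[q_j(r)]$ mid-transit, the cops can abandon the transit and capture it. Otherwise, once the cops occupy $q(r)$, the robber sits at a vertex $r''\notin\bigcup_j N[q_j(r)]$, and we would like to conclude $r''\in R_i$ and repeat.

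The last step is exactly where the argument, and the conjecture, gets stuck. The teleporting response only guarantees $N[r]\setminus\bigcup_j N[q_j(r)]\subseteq R_i$, but while the cops spent two rounds catching up the robber took \emph{two} steps, and in a diameter-$2$ graph two steps reach every vertex, so $r''$ need not lie in $N[r]$ and no progress is made. Overcoming this requires the ordinary cops to implement each teleporting response with effectively one robber-move of delay rather than two --- equivalently, to maintain the invariant that, whenever the robber stands at $r$, the cops are already within a single move of a configuration realising $q(r')$ for every $r'\in N[r]$ simultaneously. Whether such an invariant can always be set up --- presumably by exploiting the abundance of traps supplied by Lemma~\ref{trapnumber}, and by choosing the teleporting responses $q(r')$ to vary slowly with $r'$ --- seems to need a genuinely new idea, and we have not been able to close the bookkeeping without using more than $k$ cops; this is why we can only offer the statement as a conjecture.
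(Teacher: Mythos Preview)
The paper offers no proof of this statement: it is explicitly labelled a \emph{conjecture}, and the surrounding text even says ``while the following conjecture seems too good to be true, we could not find a counterexample to it''. So there is nothing for your attempt to be compared against --- the paper makes no claim to have a proof.

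Your write-up is honest about this: you do not claim a proof either, and you correctly isolate the obstruction. The shadowing idea is the natural thing to try, and your diagnosis of why it stalls is accurate --- in diameter~$2$ the robber can reach any vertex in the two rounds the ordinary cops need to simulate one teleporting move, so the level-set index need not drop. Your proposed fix (maintain an invariant that the cops are within one move of $q(r')$ for \emph{every} $r'\in N[r]$) would indeed suffice, but as you note there is no reason to believe such configurations exist with only $k$ cops, and the paper gives no hint of how to achieve this. In short: your proposal is not a proof, you say so yourself, and the paper agrees that none is known.
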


We note that this, together with Corollary \ref{telepcorol},  would imply Conjecture \ref{mainconj}.

\end{document}